\newtheorem{theorem}{Theorem}[section]
\newtheorem{proposition}[theorem]{Proposition}
\theoremstyle{definition}
\newtheorem{definition}[theorem]{Definition}
\newtheorem{example}[theorem]{Example}
\newtheorem{remark}[theorem]{Remark}
\begin{document}

\title[Coloring Fibonacci-Cayley tree]{Coloring Fibonacci-Cayley tree: An application to neural networks}

\keywords{Neural networks, learning problem, Cayley tree, separation property, entropy}
\subjclass{Primary 37A35, 37B10, 92B20}

\author{Jung-Chao Ban}
\address[Jung-Chao Ban]{Department of Applied Mathematics, National Dong Hwa University, Hualien 97401, Taiwan, ROC.}

\author[Chih-Hung Chang]{Chih-Hung Chang*}
\thanks{*Author to whom any correspondence should be addressed.} 
\address[Chih-Hung Chang]{Department of Applied Mathematics, National University of Kaohsiung, Kaohsiung 81148, Taiwan, ROC.}
\email{chchang@nuk.edu.tw}

\thanks{This work is partially supported by the Ministry of Science and Technology, ROC (Contract No MOST 105-2115-M-259 -006 -MY2 and 105-2115-M-390 -001 -MY2). The first author is partially supported by National Center for Theoretical Sciences.}

\date{June 19, 2017}
\baselineskip=1.5\baselineskip

\begin{abstract}
This paper investigates the coloring problem on Fibonacci-Cayley tree, which is a Cayley graph whose vertex set is the Fibonacci sequence. More precisely, we elucidate the complexity of shifts of finite type defined on Fibonacci-Cayley tree via an invariant called entropy. It comes that computing the entropy of a Fibonacci tree-shift of finite type is equivalent to studying a nonlinear recursive system. After proposing an algorithm for the computation of entropy, we apply the result to neural networks defined on Fibonacci-Cayley tree, which reflect those neural systems with neuronal dysfunction. Aside from demonstrating a surprising phenomenon that there are only two possibilities of entropy for neural networks on Fibonacci-Cayley tree, we reveal the formula of the boundary in the parameter space.
\end{abstract}

\maketitle

\section{Introduction}

For the past few decades, neural networks have been developed to mimic brain behavior so that they are capable of exhibiting complex and various phenomena; they are widely applied in many disciplines such as signal propagation between neurons, deep learning, image processing, pattern recognition, and information technology \cite{AB-1999}.

While the overwhelming majority of neural network models adopt $n$-dimensional lattice as network topology, Gollo \emph{et al.}~\cite{GKC-PCB2009,GKC-PRE2012,GKC-SR2013} propose a neural network with tree structure and show that the dynamic range attains large values. Recent literature also provides shreds of evidence that excitable media with a tree structure performed better than other network topologies \cite{AC-PRE2008,KC-NP2006,LSR-PRL2011}. Mathematically speaking, a tree is a group/semigroup with finitely free generators, and group is one of the most ubiquitous structures found both in mathematics and in nature. A Cayley graph is a graph which represents the structure of a group via the notions of generators of the group. Remarkably, Pomi proposes a neural representation of mathematical group structures which store finite group through their Cayley graphs \cite{Pomi-BMB2016}.

Since neural networks with tree structure come to researcher's attention, it is natural to ask how the complexity of a tree structure neural network can be measured. Alternatively, it is of interest to know how much information a neural network can store. This motivates the investigation of the notion of \emph{tree-shifts} introduced by Aubrun and B\'{e}al \cite{AB-TCS2012, AB-TCS2013}. A tree-shift is a shift space defined on a Cayley tree, and a shift space is a dynamical system which consists of patterns avoiding a set of local patterns (such a set is called a \emph{forbidden set}). Roughly speaking, a tree-shift is a Cayley color tree.

Now that we aim to elaborate the complexity of a tree-shift; that is, the amount of information a tree-shift is capable of carrying. From a mathematical point of view, \emph{entropy} is an invariant that measures the complexity of a system. Lind \cite{Lind-ETDS1984} reveals that the topological entropy of a one-dimensional \emph{shift of finite type} (SFT, a shift space whose forbidden set is finite) is the logarithm of a Perron number, and each Perron number can be realized by a one-dimensional SFT. Moreover, there is a closed formula for the topological entropy of a one-dimensional SFT. Nevertheless, the scenario for multidimensional cases is dramatically different.

Unlike the one-dimensional case, there is no algorithm for the computation of the topological entropy of multidimensional SFTs so far. Hochman and Meyerovitch \cite{HM-AoM2010} demonstrate that the topological entropy of a multidimensional SFT is the infimum of a monotonic recursive sequence of rational numbers. For the estimation of topological entropy, readers are referred to \cite{Friedland-2003, MP-ETDS2013, MP-SJDM2013, MP-1979} and the references therein.

In \cite{AB-TCS2012}, Aubrun and B\'{e}al indicate that tree-shifts constitute an intermediate class in between one-sided and multidimensional shifts. Ban and Chang propose an algorithm for the computation of the entropy of a class of tree-shifts called \emph{tree-shifts of finite type} and reveal that there are only two possibilities for the entropy of cellular neural networks with tree structure \cite{BC-2017,BC-2015a,BCH-2017}. The methodology is extremely novel and infers a possibility of approaching the entropy of multidimensional SFTs. They also demonstrate that the entropy of a tree-shift of finite type is the logarithm of a Perron number and vice versa, which extends Lind's result.

Nowadays, we have known that the entropy of a tree structure neural network reflects its complexity. It is natural to consider the complex nature of a neural network representing disordered brain such as Alzheimer's disease, which is one of the most prevalent neurodegenerative disorders causing dementia and related severe public health concerns. It is well-known that Alzheimer's disease is an irreversible, progressive brain disorder that slowly destroys the patient's memory and thinking skills; although the greatest known risk factor is aging, Alzheimer's disease is not just a disease for elders.

Recent experimental investigations support the hypothesis that the concept of illness progression via neuron-to-neuron transmission and transsynaptic transport of pathogens from affected neurons to anatomically interconnected nerve cells is compatible with the inordinately drawn out prodromal phase and the uniform progression of the pathological process in Alzheimer's disease. Also, a neuron-to-neuron transfer and propagation via seeding offer the most straightforward explanation for both the predictable distribution pattern of the intraneuronal tau lesions in the brain as well as the prolonged rate of disease progression that characterize Alzheimer's disease neuropathologically. Readers who are interested in recent development about Alzheimer's disease are referred to \cite{BT-AN2011,BMK-NRMCB2010,GCT-TN2010} and the references therein for more details.

To investigate the complexity of a system with neuronal dysfunction, we propose neural networks whose topologies are Cayley graphs. The basic idea is that an infected neuron cannot transfer signal correctly. Hence we treat those inherit neurons as dead ones. In other words, if $x$ is a node representing some infected neuron, then no node with prefix $x$ is a node in the graph. To clarify the investigation, we study neural networks whose topology is Fibonacci-Cayley tree, which is a semigroup with generators $\{1, 2\}$ and a relation $2 * 2 = 2$. Namely, the nodes of a Fibonacci-Cayley tree come from the Fibonacci sequence.

In Section 2, we recall some definitions and notions about tree-shifts and extend the concept to define a \emph{Fibonacci tree-shift}, which is a shift space whose topology is Fibonacci-Cayley tree. It follows from the definition that a tree-shift is a dynamical viewpoint of coloring problem on a Cayley tree under a given rule.

Section 3 provides the idea of the entropy of a Fibonacci tree-shift. We demonstrate therein that the computation of the entropy of a Fibonacci tree-shift of finite type is equivalent to studying a nonlinear recursive system and propose an algorithm for computing entropy. It is remarkable that, except for extending the methodology developed in \cite{BC-2017} to Fibonacci tree-shifts of finite type, we can generalize such an algorithm to tree-shifts of finite type with more complex topology. Our result might help in the investigation of the multidimensional coloring problem.

Section 4 applies the results to elucidate the complexity of neural networks on Fibonacci-Cayley tree. The Fibonacci tree-shifts came from neural networks are constrained by the so-called \emph{separation property}; after demonstrating there are only two possibilities of entropy for neural networks on Fibonacci-Cayley tree, we reveal the formula of the boundary in the parameter space.

\section{Fibonacci-Cayley Tree}

We introduce below some basic notions of symbolic dynamics on Fibonacci-Cayley tree. The main difference between Fibonacci-Cayley tree and infinite Cayley trees is their topologies. We refer to \cite{AB-TCS2012,BC-2015} for an introduction to symbolic dynamics on infinite Cayley trees.

\subsection{Cayley tree}

Let $\Sigma = \{1, 2, \ldots, d\}$, $d \in \mathbb{N}$, and let $\Sigma^* = \bigcup_{n \geq 0} \Sigma^n$ be the set of words over $\Sigma$, where $\Sigma^0 = \{\epsilon\}$ consists of the empty word $\epsilon$. An \emph{infinite (Cayley) tree} $t$ over a finite alphabet $\mathcal{A}$ is a function from $\Sigma^*$ to $\mathcal{A}$; a \emph{node} of an infinite tree is a word of $\Sigma^*$, and the empty word relates to the \emph{root} of the tree. Suppose $x$ is a node of a tree. Each node $xi$, $i \in \Sigma$, is known as a \emph{child} of $x$ while $x$ is the \emph{parent} of $xi$. A sequence of words $(w_k)_{1 \leq k \leq n}$ is called a \emph{path} if, for all $k \leq n-1$, $w_{k+1} = w_k i_k$ for some $i_k \in \Sigma$ and $w_1 \in \Sigma^*$.

Let $t$ be a tree and let $x$ be a node, we refer $t_x$ to $t(x)$ for simplicity. A \emph{subtree} of a tree $t$ rooted at a node $x$ is the tree $t'$ satisfying $t'_y = t_{xy}$ for all $y \in \Sigma^*$, where $xy = x_1 \cdots x_m y_1 \cdots y_n$ means the concatenation of $x = x_1 \cdots x_m$ and $y_1 \cdots y_n$. Given two words $x = x_1 x_2 \ldots x_i$ and $y = y_1 y_2 \ldots y_j$, we say that $x$ is a \emph{prefix} of $y$ if and only if $i \leq j$ and $x_k = y_k$ for $1 \leq k \leq i$. A subset of words $L \subset \Sigma^*$ is called \emph{prefix-closed} if each prefix of $L$ belongs to $L$. A function $u$ defined on a finite prefix-closed subset $L$ with codomain $\mathcal{A}$ is called a \emph{pattern}, and $L$ is called the \emph{support} of the pattern; a pattern is called an \emph{$n$-block} if its support $L = x \Delta_{n-1}$ for some $x \in \Sigma^*$, where $\Delta_n = \bigcup\limits_{0 \leq i \leq n} \Sigma^i$.

Let $\mathcal{A}^{\Sigma^*}$ be the set of all infinite trees over $\mathcal{A}$. For $i \in \Sigma$, the shift transformations $\sigma_i: \mathcal{A}^{\Sigma^*} \to \mathcal{A}^{\Sigma^*}$ is defined as $(\sigma_i t)_x = t_{ix}$ for all $x \in \Sigma^*$. The set $\mathcal{A}^{\Sigma^*}$ equipped with the shift transformations $\sigma_i$ is called the \emph{full tree-shift} over $\mathcal{A}$. Suppose $w = w_1 \cdots w_n \in \Sigma^*$. Define $\sigma_w = \sigma_{w_n} \circ \sigma_{w_{n-1}} \circ \cdots \circ \sigma_{w_1}$; it follows immediately that $(\sigma_w t)_x = t_{wx}$ for all $x \in \Sigma^*$.

Suppose that $u$ is a pattern and $t$ is a tree. Let $\mathrm{supp} (u)$ denote the support of $u$. We say that $u$ is accepted by $t$ if there exists $x \in \Sigma^*$ such that $u_y = t_{xy}$ for every node $y \in \mathrm{supp} (u)$. In this case, we say that $u$ is a pattern of $t$ rooted at the node $x$. A tree $t$ is said to \emph{avoid} $u$ if $u$ is not accepted by $t$; otherwise, $u$ is called an \emph{allowed pattern} of $t$. Given a collection of finite patterns $\mathcal{F}$, let $\mathsf{X}_{\mathcal{F}}$ denote the set of trees avoiding any element of $\mathcal{F}$. A subset $X \subseteq \mathcal{A}^{\Sigma^*}$ is called a \emph{tree-shift} if $X = \mathsf{X}_{\mathcal{F}}$ for some $\mathcal{F}$, and $\mathcal{F}$ is \emph{a forbidden set} of $X$.

A tree-shift $\mathsf{X}_{\mathcal{F}}$ is called a \emph{tree-shift of finite type} (TSFT) if the forbidden set $\mathcal{F}$ is finite; we say that $\mathsf{X}_{\mathcal{F}}$ is a \emph{Markov tree-shift} if $\mathcal{F}$ consists of two-blocks. Suppose $A_1, A_2, \ldots, A_d$ are binary matrices indexed by $\mathcal{A}$, the \emph{vertex tree-shift} $\mathsf{X}_{A_1, A_2, \ldots, A_d}$ is defined as
\begin{equation}
\mathsf{X}_{A_1, A_2, \ldots, A_d} = \{t \in \mathcal{A}^{\Sigma^*}: A_i (t_x, t_{xi}) = 1 \text{ for all } x \in \Sigma^*, 1 \leq i \leq d\}.
\end{equation}
It follows immediately that each vertex tree-shift is a Markov tree-shift and each Markov tree-shift is a TSFT. In \cite{BC-2015}, the authors indicate that every TSFT is conjugated to a vertex tree-shift.

\subsection{Fibonacci-Cayley tree}

Let $\Sigma = \{1, 2\}$ and let
$A = \begin{pmatrix}
1 & 1 \\
1 & 0
\end{pmatrix}$.
Denote by $\Sigma_A$ the collection of finite walks of graph representation $G$ (cf.~Figure \ref{fig:Fibonacci-graph}) of $A$ together with the empty word $\epsilon$. A \emph{Fibonacci-Cayley tree} (Fibonacci tree) $t$ over a finite alphabet $\mathcal{A}$ is a function from $\Sigma_A$ to $\mathcal{A}$; in other words, a Fibonacci tree is a pattern whose support is the Fibonacci lattice $\Sigma_A$. A pattern $u$ is called an $n$-block if $\mathrm{supp} (u) = x \Delta_{n-1} \bigcap \Sigma_A$ for some $x \in \Sigma_A$. Notably, there are two different supports for $n$-blocks, say $L_1 = \Delta_{n-1} \bigcap \Sigma_A$ and $L_2 = 1 \Delta_{n-1} \bigcap \Sigma_A$, up to a shift of node.

\begin{figure}
\begin{center}
\includegraphics[scale=0.8,page=1]{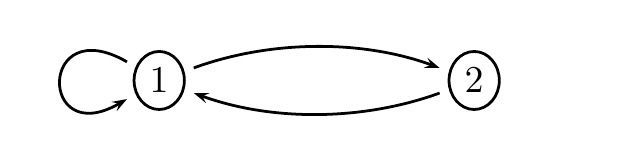}
\end{center}
\caption{Each node of the Fibonacci-Cayley tree is a finite walk in the Fibonacci graph.}
\label{fig:Fibonacci-graph}
\end{figure}

It is seen that the shift transformation $\sigma_2$ is not well-defined in this case. We consider the shift transformation $\sigma: \mathcal{A}^{\Sigma_A} \times \Sigma_A \to \mathcal{A}^{\Sigma_A}$ given by $\sigma (t, x)_y = t_{xy}$ if and only if $xy \in \Sigma_A$.  The set $\mathcal{A}^{\Sigma_A}$ equipped with the new defined shift transformation $\sigma$ is called the \emph{full Fibonacci tree-shift} over $\mathcal{A}$. A subset $X \subseteq \mathcal{A}^{\Sigma_A}$ is called a \emph{Fibonacci tree-shift} if $X = \mathsf{X}_{\mathcal{F}}$ for some forbidden set $\mathcal{F}$.

Similar to the definitions above, a Fibonacci tree-shift $\mathsf{X}_{\mathcal{F}}$ is called a \emph{Fibonacci tree-shift of finite type} (FTSFT) if $\mathcal{F}$ is a finite set. If $\mathcal{F}$ consists of two-blocks, then $\mathsf{X}_{\mathcal{F}}$ is a \emph{Markov-Fibonacci tree-shift}. Suppose $A_1$ and $A_2$ are binary matrices indexed by $\mathcal{A}$, the \emph{vertex-Fibonacci tree-shift} $\mathsf{X}_{A_1, A_2}$ is defined as
\begin{equation}
\mathsf{X}_{A_1, A_2} = \{t \in \mathcal{A}^{\Sigma_A}: A_i(t_x, t_{xi}) = 1 \text{ for all } x, xi \in \Sigma_A\}.
\end{equation}
See Figure \ref{fig:Fibonacci-tree} for the support of a Fibonacci tree.

\begin{figure}[tbp]
\begin{center}
\includegraphics[scale=0.8,page=2]{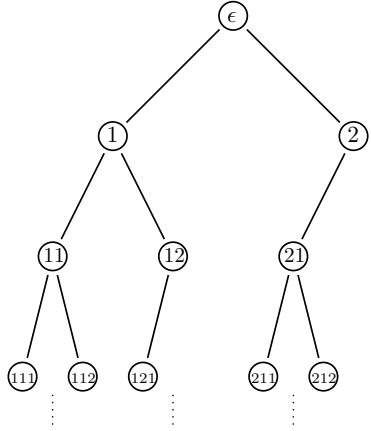}
\end{center}
\caption{The support of Fibonacci-Cayley tree. It is seen that nodes $22, 122, 221, 222, \cdots$ are absent in this case.}
\label{fig:Fibonacci-tree}
\end{figure}

\begin{example}
Suppose that the alphabet $\mathcal{A} = \{R, G\}$ consists of red and green two colors. Let $A_1$ and $A_2$, the coloring rules on the left and right directions, respectively, be given as
$$
A_1 = A_2 = \begin{pmatrix}
1 & 1 \\
1 & 0
\end{pmatrix}.
$$
That is, we can not color green on any two consecutive nodes.
\end{example}

\section{Complexity of Colored Fibonacci-Cayley Tree}

This section investigates the complexity of Fibonacci tree-shifts of finite type. We use an invariant known as \emph{entropy} introduced in \cite{BC-2017,BC-2015a}.

\subsection{Entropy of Fibonacci tree-shifts}

Let $X$ be a Fibonacci tree-shift. For $n \in \mathbb{N}$ and $w \in \Sigma_A$, let $\Gamma^{[w]}_n(X)$ denote the set of $n$-blocks of $X$ rooted at $w$. More explicitly,
$$
\Gamma^{[w]}_n(X) = \{u: u \text{ is allowed and } \mathrm{supp}(u) = w \Delta_{n-1} \bigcap \Sigma_A\}.
$$
Fix $c \in \mathcal{A}$, set $\Gamma^{[w]}_{c; n}(X) = \{u \in \Gamma^{[w]}_n(X): u_w = c\}$ and $\gamma^{[w]}_{c; n} = |\Gamma^{[w]}_{c; n}(X)|$, where $| \cdot|$ means the cardinality. For simplicity, we refer to $\Gamma^{[\epsilon]}_{c; n}(X)$ and $\gamma^{[\epsilon]}_{c; n}$ as $\Gamma_{c; n}(X)$ and $\gamma_{c; n}$, respectively. The \emph{entropy} of $X$ is defined as follows.

\begin{definition}
Suppose $X$ is a Fibonacci tree-shift. The \emph{entropy} of $X$, denoted by $h(X)$, is defined as 
\begin{equation}\label{eq:entropy-Fibonacci-tree-shift}
h(X)=\lim_{n\rightarrow \infty} \frac{\ln^2 \gamma_n}{n}
\end{equation}
provided the limit exists, where $\ln^2 = \ln \circ \ln $.
\end{definition}

Notably, the growth rate of the number of nodes of $\Delta_n \bigcap \Sigma_A$ is exponential; this makes the speed of increase of $\gamma_n$ doubly exponential. Hence, the entropy $h(X)$ measures the growth rate of feasible patterns concerning their height.

Let $\mathcal{A} = \{c_1, c_2, \ldots, c_k\}$ for some $k \geq 2 \in \mathbb{N}$. Given two binary matrices $A_1 = (a_{i, j})_{i, j = 1}^k$ and $A_2 = (b_{i, j})_{i, j = 1}^k$, it comes immediately that $\gamma_n = \sum\limits_{\ell=1}^k \gamma_{\ell; n}$ and
\begin{equation}\label{eq:gamma-in-original-formula}
\gamma_{i; n} = \sum\limits_{j_1, j_2 = 1}^k a_{i, j_1} b_{i, j_2} \gamma_{j_1; n-1}^{[1]} \gamma_{j_2; n-1}^{[2]}
\end{equation}
for $1 \leq i \leq k$ and $n \geq 3 \in \mathbb{N}$, herein $X = \mathsf{X}_{A_1, A_2}$ is a vertex-Fibonacci tree-shift, $\gamma_{\ell; n}^{[w]}$ refers to $\gamma_{c_{\ell}; n}^{[w]}$, and $\gamma_{i; 2} = (\sum\limits_{j=1}^k a_{i, j}) \cdot (\sum\limits_{j=1}^k b_{i, j})$ for $1 \leq i \leq k$.

For $w \in \Sigma_A$, the \emph{follower set} of $w$ is defined as
$$
F_w = \{w' \in \Sigma_A: w w' \in \Sigma_A\}.
$$
Since each element $w = w_1 w_2 \cdots w_n \in \Sigma_A$ is a finite walk in the Fibonacci graph $G$ (cf.~Figure \ref{fig:Fibonacci-graph}), it is easily seen that
$$
F_w = \left\{\begin{aligned}
&\Sigma_A, && w_n = 1; \\
&F_2, && \hbox{otherwise.}
\end{aligned}\right.
$$
This makes
\begin{equation}\label{eq:gamma-reduce-formula}
\gamma_{i; n}^{[1]} = \gamma_{i; n} \quad \text{and} \quad \gamma_{i; n}^{[2]} = \sum_{j=1}^k a_{i, j} \gamma_{j; n-1}^{[21]} = \sum_{j=1}^k a_{i, j} \gamma_{j; n-1}
\end{equation}
for $1 \leq i \leq k$ and $n \geq 3 \in \mathbb{N}$. Substituting \eqref{eq:gamma-in-original-formula} with \eqref{eq:gamma-reduce-formula} derives
\begin{equation*}
\gamma_{i; n} = \sum\limits_{j_1, j_2, j_3 = 1}^k a_{i, j_1} b_{i, j_2} a_{j_2, j_3} \gamma_{j_1; n-1} \gamma_{j_3; n-2}
\end{equation*}
for $1 \leq i \leq k$ and $n \geq 4 \in \mathbb{N}$. Alternatively, computing the entropy of $X$ is equivalent to the investigation of the following nonlinear recursive system.
\begin{equation}\label{eq:gamma-recursive-formula}
\left\{\begin{aligned}
\gamma_{i; n} &= \sum\limits_{j_1, j_2, j_3 = 1}^k a_{i, j_1} b_{i, j_2} a_{j_2, j_3} \gamma_{j_1; n-1} \gamma_{j_3; n-2}, \\
\gamma_{i; 2} &= (\sum\limits_{j=1}^k a_{i, j}) \cdot (\sum\limits_{j=1}^k b_{i, j}),
\end{aligned}\right.
\end{equation}
for $1 \leq i \leq k$ and $n \geq 4 \in \mathbb{N}$. We call \eqref{eq:gamma-recursive-formula} the \emph{recurrence representation} of $\mathsf{X}_{A_1, A_2}$.

\begin{example}\label{eg:A12-golden-mean}
Suppose that $\mathcal{A} = \{c_1, c_2\}$ and
$$
A_1 = A_2 = \begin{pmatrix}
1 & 1 \\
1 & 0
\end{pmatrix}.
$$
Observe that
$$
\left\{
\begin{aligned}
\gamma_{1; n} &= (\gamma_{1; n-1}^{[1]} + \gamma_{2; n-1}^{[1]}) \cdot (\gamma_{1; n-1}^{[2]} + \gamma_{2; n-1}^{[2]}), \\
\gamma_{2; n} &= \gamma_{1; n-1}^{[1]} \cdot \gamma_{1; n-1}^{[2]}, \\
\gamma_{1; 2} &= 4, \gamma_{2; 2} = 1.
\end{aligned}
\right.
$$
Since $\gamma_{i; n}^{[1]} = \gamma_{i; n}$,
$$
\gamma_{1; n}^{[2]} = \gamma_{1; n-1}^{[21]} + \gamma_{2; n-1}^{[21]} = \gamma_{1; n-1} + \gamma_{2; n-1},
$$
and
$$
\gamma_{2; n}^{[2]} = \gamma_{1; n-1}^{[21]} = \gamma_{1; n-1},
$$
examining the entropy of $\mathsf{X}_{A_1, A_2}$ is equivalent to studying
$$
\left\{
\begin{aligned}
\gamma_{1; n} &= (\gamma_{1; n-1} + \gamma_{2; n-1}) \cdot (2\gamma_{1; n-2} + \gamma_{2; n-2}), \\
\gamma_{2; n} &= \gamma_{1; n-1} \cdot (\gamma_{1; n-2} + \gamma_{2; n-2}), \\
\gamma_{1; 2} &= 4, \gamma_{2; 2} = 1.
\end{aligned}
\right.
$$
\end{example}

In the next subsection, we introduce an algorithm for solving the growth rate of the nonlinear recursive system \eqref{eq:gamma-recursive-formula}.

\subsection{Computation of entropy}

The previous subsection reveals that computing the entropy of a Fibonacci tree-shift of finite type is equivalent to studying a corresponding nonlinear recursive system. To address an algorithm for the computation of the entropy, we start with the following proposition.

\begin{proposition}\label{prop:entropy-exist-TSFT}
Suppose that $X = \mathsf{X}_{A_1, A_2}$ is a vertex-Fibonacci tree-shift over $\mathcal{A}$ with respect to $A_1, A_2$. Then the limit \eqref{eq:entropy-Fibonacci-tree-shift} exists and
\begin{equation}\label{eq:lnln-sum-gamma-equal-ln-sum-ln-gamma}
h(X) = \lim_{n \to \infty} \dfrac{\ln \sum_{i=1}^k \ln \gamma_{i; n}}{n}.
\end{equation}
\end{proposition}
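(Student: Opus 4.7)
The plan is to reduce both quantities in \eqref{eq:entropy-Fibonacci-tree-shift} and \eqref{eq:lnln-sum-gamma-equal-ln-sum-ln-gamma} to the growth rate of $M_n := \max_{1\leq i\leq k}\gamma_{i;n}$, which in turn is controlled by a Fibonacci-type recurrence derived from \eqref{eq:gamma-recursive-formula}. With $S_n := \sum_{i=1}^k \ln\gamma_{i;n}$ and the identity $\gamma_n = \sum_i \gamma_{i;n}$, one obtains the elementary sandwiches $\ln M_n \leq \ln\gamma_n \leq \ln M_n + \ln k$ and $\ln M_n \leq S_n \leq k\ln M_n$. Once $M_n > e$, a second logarithm yields $|\ln\ln\gamma_n - \ln\ln M_n| = o(1)$ and $|\ln S_n - \ln\ln M_n| \leq \ln k$, so $\ln\ln\gamma_n - \ln S_n$ stays bounded in $n$. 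Dividing by $n$, the two limits in the proposition exist simultaneously and agree whenever $\lim_n \ln\ln M_n/n$ exists; by the definition of entropy the common value is $h(X)$.

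For existence, read from \eqref{eq:gamma-recursive-formula} the upper estimate $\gamma_{i;n}\leq k^3 M_{n-1}M_{n-2}$, hence
$$
\ln M_n \leq \ln M_{n-1} + \ln M_{n-2} + 3\ln k.
$$
A matching lower bound is produced by retaining a single admissible triple $(j_1,j_2,j_3)$ in \eqref{eq:gamma-recursive-formula}, namely $\gamma_{i;n}\geq \gamma_{j_1;n-1}\gamma_{j_3;n-2}$; chasing such triples along a ``maximizing'' admissible path in the coupled labelled graph of $(A_1,A_2)$ and iterating yields a Fibonacci-type super-recurrence of the same shape up to additive constants. Setting $\mu_n := \ln M_n + 3\ln k$ converts the pair of estimates into $\mu_{n-1}+\mu_{n-2} - C \leq \mu_n \leq \mu_{n-1}+\mu_{n-2}$ for some $C\geq 0$, and a comparison against the genuine Fibonacci recursion $F_n = F_{n-1}+F_{n-2}$ produces $\ln\mu_n/n \to \ln\phi$ with $\phi = (1+\sqrt{5})/2$ in the non-degenerate regime, and $\ln\mu_n/n \to 0$ in the degenerate one. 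Either way the limit exists, completing the proof in combination with the first paragraph.

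The principal obstacle is the lower bound: the nonlinearity of \eqref{eq:gamma-recursive-formula} precludes a direct Perron--Frobenius argument, and the maximizer in the definition of $M_n$ may migrate with $n$. The delicate point is to show that an admissible path carrying the near-maximum value can be coherently extended from scale $n-2$ to scale $n$, using the coupled connectivity of $A_1$ and $A_2$; separating the degenerate regime (some rows of $A_1$ or $A_2$ identically zero, yielding vanishing entropy where the statement holds trivially) from the generic one is what distinguishes this analysis from the classical one-dimensional setting and from the regular-tree case treated in \cite{BC-2017}.
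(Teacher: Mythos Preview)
Your first paragraph is correct and in fact cleaner than the paper's version of the same idea. The paper also reduces both sides of \eqref{eq:lnln-sum-gamma-equal-ln-sum-ln-gamma} to $\ln\ln M_n$ via the inequalities $M_n\le\gamma_n\le kM_n$ and $\ln M_n\le S_n\le k\ln M_n$; it phrases this through AM--GM and an (unnecessary) ``persistence of the maximizer'' claim, but the content is identical. So on the equality part you and the paper agree.

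Where you diverge is on \emph{existence}. The paper does not attempt to compute anything: it simply asserts that $\ln^2\gamma_n$ is subadditive and invokes Fekete's lemma (citing \cite[Chapter~4]{LM-1995}). Your route instead tries to pin down the actual value of $\lim_n \ln\ln M_n/n$ via a two-sided Fibonacci recurrence, and this is where the argument has a gap. The upper bound $\ln M_n\le \ln M_{n-1}+\ln M_{n-2}+3\ln k$ is immediate from \eqref{eq:gamma-recursive-formula}, but the matching lower bound $\mu_n\ge \mu_{n-1}+\mu_{n-2}-C$ is not: it would require $M_n\ge c\,M_{n-1}M_{n-2}$, i.e.\ an index $i$ with $a_{i,j_1}b_{i,j_2}a_{j_2,j_3}=1$ where $j_1$ realises $M_{n-1}$ \emph{and} $j_3$ realises $M_{n-2}$. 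Nothing in the hypotheses guarantees that a single parent simultaneously reaches both maximizers, and you yourself flag exactly this (``the maximizer may migrate with $n$'') without resolving it. The phrase ``chasing triples along a maximizing admissible path in the coupled labelled graph'' is a description of the difficulty, not a proof.

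There is also a scope issue: if your two-sided recurrence held, you would have proved that $h(X)\in\{0,\ln\phi\}$ for \emph{every} vertex-Fibonacci tree-shift. That is the content of the later Theorems \ref{thm:entropy-simple-system}--\ref{thm:entropy-Fibonacci-tree-shift} (and, in the neural-network setting, Theorem \ref{thm:entropy-region-W-equation}), established only after the simple-subsystem machinery is developed. The present proposition neither needs nor claims this; a short subadditivity argument for existence, followed by your sandwich, suffices.
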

\begin{proof}
The existence of limit \eqref{eq:entropy-Fibonacci-tree-shift} comes immediately from the subadditivity of $\ln^2 \gamma_n$. Readers are referred to \cite[Chapter 4]{LM-1995} for related discussion. From the Cauchy inequality
$$
\gamma_{1; n} \cdot \gamma_{2; n} \cdots \gamma_{k; n} \leq \left( \dfrac{\sum_{i=1}^k \gamma_{i; n}}{k} \right)^k,
$$
we derive that
$$
\sum_{i=1}^k \ln \gamma_{i; n} \leq k (\ln \sum_{i=1}^k \gamma_{i; n} - \ln k).
$$
It can be verified without difficulty that
$$
\lim_{n \to \infty} \dfrac{\ln \sum_{i=1}^k \ln \gamma_{i; n}}{n} \leq \lim_{n \to \infty} \dfrac{\ln^2 \sum_{i=1}^k \gamma_{i; n}}{n} = h(X).
$$

It is noteworthy that if there exist $1 \leq i \leq k$ and $m \geq 2$ such that $\gamma_{i; m} \geq \gamma_{j; m}$ for all $j \neq i$, then $\gamma_{i; n} \geq \gamma_{j; n}$ for all $j \neq i$ and $n \geq m \in \mathbb{N}$. Without loss of generality, we may assume that $\gamma_{1; n} \geq \gamma_{i; n}$ for all $n \geq 2 \in \mathbb{N}$ and $1 \leq i \leq k$. The inequality
$$
\gamma_{1; n} \leq \sum_{i=1}^k \gamma_{i; n} \leq k \gamma_{1; n}
$$
reveals that
$$
\lim_{n \to \infty} \dfrac{\ln^2 \sum_{i=1}^k \gamma_{i; n}}{n} = \lim_{n \to \infty} \dfrac{\ln^2 \gamma_{1; n}}{n}.
$$
Meanwhile,
$$
\sum_{i=1}^k \ln \gamma_{i; n} = \ln \prod_{i=1}^k \gamma_{i; n} \geq \ln \gamma_{1; n}.
$$
Therefore, we conclude that
$$
\lim_{n \to \infty} \dfrac{\ln \sum_{i=1}^k \ln \gamma_{i; n}}{n} \geq \lim_{n \to \infty} \dfrac{\ln^2 \gamma_{1; n}}{n} = \lim_{n \to \infty} \dfrac{\ln^2 \sum_{i=1}^k \gamma_{i; n}}{n} = h(X).
$$
This completes the proof.
\end{proof}

A symbol $c \in \mathcal{A}$ is called \emph{essential} (resp.~\emph{inessential}) if $\gamma_{c; n} \geq 2$ for some $n \in \mathbb{N}$ (resp.~$\gamma_{c; n} = 1$ for all $n \in \mathbb{N}$). Proposition \ref{prop:entropy-exist-TSFT} infers the alphabet $\mathcal{A}$ can be expressed as the disjoint union of two subsets $\mathcal{A}_E$ and $\mathcal{A}_I$ consisting of essential and inessential symbols, respectively. In addition, it is easily seen from \eqref{eq:lnln-sum-gamma-equal-ln-sum-ln-gamma} that
$$
h(X) = \lim_{n \to \infty} \frac{\ln^2 \sum_{c \in \mathcal{A}} \gamma_{c; n}}{n} = \lim_{n \to \infty} \frac{\ln^2 \sum_{c \in \mathcal{A}_E} \gamma_{c; n}}{n}.
$$
For the simplicity, we assume that $\mathcal{A} = \mathcal{A}_E$; that is, each symbol is essential.

Now we turn back to the elaboration of the nonlinear recursive system \eqref{eq:gamma-recursive-formula}. First, we consider a subsystem
\begin{equation}\label{eq:subsystem-gamma-recursive}
\gamma_{i; n} = \gamma_{i_{j_1}; n-1} \gamma_{i_{j_2}; n-1}, \quad 1 \leq i \leq k,
\end{equation}
arose from \eqref{eq:gamma-recursive-formula}. Notably, for each $i$, the above subsystem contains only one term in the original system and the coefficient is $1$. We call such a system \emph{simple}. Generally speaking, a Fibonacci tree-shift has many simple recurrence representations. The following theorem reveals the entropy of a vertex-Fibonacci tree-shift with simple recurrence representation relates to the spectral radius of some integral matrix.

\begin{theorem}\label{thm:entropy-simple-system}
Suppose that $X = \mathsf{X}_{A_1, A_2}$ is a Fibonacci tree-shift over $\mathcal{A}$ such that each symbol is essential. If the recurrence representation of $X$ is simple, then there exists an integral matrix $M$ such that 
\begin{equation*}
h(X) = \log \rho_M,
\end{equation*}
where $\rho_M$ is the spectral radius of $M$.
\end{theorem}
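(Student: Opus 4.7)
The plan is to linearize the simple recurrence by taking logarithms. Setting $L_{i;n} := \ln \gamma_{i;n}$, the simple form of \eqref{eq:subsystem-gamma-recursive} (equivalently, the pruned form of \eqref{eq:gamma-recursive-formula} in which only a single triple $(j_1,j_2,j_3)$ survives for each $i$) transforms into the purely additive linear recursion
\[
L_{i;n} \;=\; L_{j_1;n-1} + L_{j_3;n-2}, \qquad 1 \le i \le k.
\]
Collecting the $L_{i;n}$ into the vector $\vec L_n = (L_{1;n},\dots,L_{k;n})^{\top}$, this takes the form $\vec L_n = M_1 \vec L_{n-1} + M_2 \vec L_{n-2}$ for nonnegative integer matrices $M_1,M_2$ that record which indices appear on the right. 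Augmenting the state to $\widetilde{\vec L}_n := (\vec L_n, \vec L_{n-1})^{\top}$ yields $\widetilde{\vec L}_n = M\, \widetilde{\vec L}_{n-1}$ for a single nonnegative $2k \times 2k$ integer matrix $M$ (or $k\times k$ if both factors in \eqref{eq:subsystem-gamma-recursive} are read at step $n-1$, as literally written).

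Iterating gives $\widetilde{\vec L}_n = M^{n-c}\, \widetilde{\vec L}_c$ for a suitable base index $c$. The essentiality assumption $\mathcal{A}=\mathcal{A}_E$ ensures $\gamma_{i;n}\ge 2$, hence $L_{i;n}>0$, for every $i$ once $n$ is large enough, so $\widetilde{\vec L}_c$ is strictly positive. A standard nonnegative-matrix argument (Perron--Frobenius applied to the irreducible blocks of $M$) then yields
\[
\lim_{n\to\infty} \frac{\ln \|\widetilde{\vec L}_n\|_1}{n} \;=\; \log \rho_M,
\]
because the strictly positive initial vector has nonzero projection onto the Perron eigenspace of the block realizing $\rho_M$. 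Since $\|\widetilde{\vec L}_n\|_1$ is comparable (up to a constant factor) to $\sum_{i=1}^k L_{i;n}$, Proposition \ref{prop:entropy-exist-TSFT} gives
\[
h(X) \;=\; \lim_{n\to\infty} \frac{\ln \sum_{i=1}^k L_{i;n}}{n} \;=\; \log \rho_M.
\]

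The main obstacles are twofold. First, one must choose the correct bookkeeping for the mixed-order (steps $n-1$ and $n-2$) recurrence so that $M$ remains a bona fide integer matrix; the companion-block doubling above handles this. Second, one must confirm that the growth of $\sum_i L_{i;n}$ is governed by $\rho_M$ itself rather than by a strictly smaller eigenvalue. This is where the essentiality reduction $\mathcal{A}=\mathcal{A}_E$ is used crucially: inessential symbols contribute only $L_{i;n}=0$ and would otherwise zero out contributions from the Perron eigenvector, so removing them in advance (as was justified by the displayed identity $h(X)=\lim_n n^{-1}\ln^2 \sum_{c\in\mathcal{A}_E}\gamma_{c;n}$) is essential for the spectral conclusion.
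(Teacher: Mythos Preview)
Your proposal is correct and follows essentially the same route as the paper: linearize the simple recurrence via $L_{i;n}=\ln\gamma_{i;n}$, pass to a $2k$-dimensional companion system $\widetilde{\vec L}_n=M\widetilde{\vec L}_{n-1}$ with $M$ a nonnegative integer matrix, and combine this with Proposition~\ref{prop:entropy-exist-TSFT}. The paper defines the same vector (written $\theta_n$) and the same $M$.

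The only noticeable difference is in how the limit $\lim_n n^{-1}\ln\|\widetilde{\vec L}_n\|_1=\ln\rho_M$ is extracted. You appeal directly to Perron--Frobenius and the strict positivity of the initial vector. The paper instead sandwiches: it uses $\gamma_{i;2}\le k^2$ to bound the entries of $\theta_2$ above (giving $h(X)\le\ln\rho_M$), and then uses $\ln\gamma_{i;n}\ge\ln\gamma_{i;n-1}$ together with $\gamma_{i;2}\ge 2$ to bound below (giving $h(X)\ge\ln\rho_M$), in each case reducing to the standard fact $\lim_n n^{-1}\ln\sum_{i,j}M^n(i,j)=\ln\rho_M$. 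Your Perron--Frobenius invocation is slightly slicker but implicitly uses the same positivity of $\theta_c$; the paper's sandwich is more elementary and sidesteps any discussion of reducible block structure. Either way the argument goes through.
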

\begin{proof}
Since $c_{i}$ is essential for $1\leq i\leq k$, we may assume that $\gamma_{i;2} \geq 2$ for simplicity. Proposition \ref{prop:entropy-exist-TSFT} demonstrates that 
\begin{equation*}
h(X) = \lim_{n \to \infty} \frac{\ln \sum_{i=1}^{k} \ln \gamma_{i;n}}{n}.
\end{equation*}
Let
\begin{equation}\label{eq:def-theta-n}
\theta_{n} = ( \ln \gamma_{1;n}, \ln \gamma_{1;n-1}, \ln \gamma_{2;n}, \ln \gamma_{2;n-1}, \ldots, \ln \gamma_{k;n}, \ln \gamma_{k;n-1})^{T}
\end{equation}
be a $2k \times 1$ vector. The recursive system \eqref{eq:gamma-reduce-formula} suggests that there exists a $2k\times 2k$ nonnegative integral matrix $M$ such that 
\begin{equation}
\theta_{n} = M \theta_{n-1} \quad \text{for} \quad n \geq 3.
\end{equation}
Note that $\gamma_{i; 2} = (\sum\limits_{j=1}^k a_{i, j}) \cdot (\sum\limits_{j=1}^k b_{i, j}) \leq k^2$ for $1 \leq i \leq k$. Therefore,
\begin{align*}
h(X) &= \lim_{n \to \infty} \frac{\ln \sum_{i=1}^{k} \ln \gamma_{i;n}}{n} \\
 &\leq \lim_{n \to \infty} \frac{\ln \sum_{i,j=1}^{2k} M^{n}(i,j)}{n} \leq \ln \rho_{M}.
\end{align*}
On the other hand,
\begin{align*}
h(X) &= \lim_{n \to \infty} \frac{\ln \sum_{i=1}^{k} \ln \gamma_{i;n}}{n} \\
 &= \lim_{n \to \infty} \frac{\ln \sum_{i=1}^{k} 2 \ln \gamma_{i;n}}{n} \\
 &\geq \lim_{n \to \infty}\frac{\ln \sum_{i=1}^{k} (\ln \gamma_{i;n} + \ln \gamma_{i;n-1})}{n} \\
 &= \lim_{n \to \infty} \frac{\ln \sum_{i,j=1}^{2k} M^{n}(i,j)}{n} = \ln \rho_{M}.
\end{align*}%
This completes the proof.
\end{proof}

We call the matrix $M$ sketched in Theorem \ref{thm:entropy-simple-system} the \emph{adjacency matrix} of the simple recursive system \eqref{eq:gamma-reduce-formula}. This makes Theorem \ref{thm:entropy-simple-system} an extension of a classical result in symbolic dynamical systems.

\begin{example}\label{eg:A12-golden-mean-p2}
Suppose that the alphabet $\mathcal{A}$ and $A_1, A_2$ are given, and the recurrence representation of $\mathsf{X}_{A_1, A_2}$ is
\begin{equation} \label{eq:eg-simple-system}
\left\{
\begin{aligned}
\gamma_{1; n} &= \gamma_{1; n-1} \cdot \gamma_{1; n-2}, \\
\gamma_{2; n} &= \gamma_{1; n-1} \cdot \gamma_{2; n-2}. 
\end{aligned}
\right.
\end{equation}
Furthermore, each symbol in $\mathcal{A}$ is essential. Let
$
\theta_n = \begin{pmatrix}
\ln \gamma_{1;n} \\
\ln \gamma_{1;n-1} \\
\ln \gamma_{2;n} \\
\ln \gamma_{2;n-1}
\end{pmatrix}.
$
It comes immediately that
$$
M = \begin{pmatrix}
1 & 1 & 0 & 0 \\
1 & 0 & 0 & 0 \\
1 & 0 & 0 & 1 \\
0 & 0 & 1 & 0
\end{pmatrix}
$$
since $\theta_n = M \theta_{n-1}$. In addition, the characteristic polynomial of $M$ is $f(\lambda) = (\lambda - 1) (\lambda + 1)(\lambda^2 - \lambda - 1)$ and the spectral radius of $M$ is $\rho_M = g$, where $g = \dfrac{1 + \sqrt{5}}{2}$ is the golden mean. Theorem \ref{thm:entropy-simple-system} asserts that the entropy of $\mathsf{X}_{A_1, A_2}$ is $h(\mathsf{X}_{A_1, A_2}) = \ln g$.
\end{example}

\begin{theorem}\label{thm:entropy-Fibonacci-tree-shift}
Suppose that $X = \mathsf{X}_{A_1, A_2}$ is a Fibonacci tree-shift over $\mathcal{A}$ such that each symbol is essential. Then
\begin{equation}
h(X) = \max \{\ln \rho_M: M \text{ is the adjacency matrix of a simple subsystem of } X\}.
\end{equation}
\end{theorem}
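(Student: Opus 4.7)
The plan is to prove the theorem via matching lower and upper bounds on $h(X)$, building on Theorem~\ref{thm:entropy-simple-system}. For the lower bound $h(X) \ge \ln \rho_M$ for each simple subsystem, I fix a choice $(j_1^{(i)}, j_2^{(i)}, j_3^{(i)})$ with $a_{i, j_1^{(i)}} b_{i, j_2^{(i)}} a_{j_2^{(i)}, j_3^{(i)}} = 1$ for each $1 \le i \le k$ and define a sequence $\tilde\gamma_{i,n} = \tilde\gamma_{j_1^{(i)}, n-1}\, \tilde\gamma_{j_3^{(i)}, n-2}$ with the same initial data as $\gamma$. Since each summand dropped from \eqref{eq:gamma-recursive-formula} is nonnegative, an easy induction gives $\tilde\gamma_{i,n} \le \gamma_{i,n}$. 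Applying Theorem~\ref{thm:entropy-simple-system} to $\tilde\gamma$ and comparing via Proposition~\ref{prop:entropy-exist-TSFT} then forces $h(X) \ge \ln \rho_M$, and maximizing over simple subsystems yields the lower bound.

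For the upper bound, the starting point is the estimate $\gamma_{i,n} \le k^3 \max_{(j_1, j_3) \in S_i} \gamma_{j_1, n-1}\, \gamma_{j_3, n-2}$, where $S_i = \{(j_1, j_3) : \exists\, j_2 \text{ with } a_{i, j_1} b_{i, j_2} a_{j_2, j_3} = 1\}$. Passing to logarithms and assembling the $2k$-vector $\theta_n$ of \eqref{eq:def-theta-n}, this becomes $\theta_n \le M_n \theta_{n-1} + (\ln k^3)\, \mathbf{1}$, where $M_n$ is the adjacency matrix of the simple subsystem whose row-$i$ choice realizes the maximum at step $n$. The critical structural observation, which I plan to exploit, is that the row choices defining a simple subsystem are made independently across $i$; consequently, for every vector $v$, the componentwise maximum $\max_{M \in \mathcal{M}} M v$ is itself of the form $M' v$ for some $M' \in \mathcal{M}$, where $\mathcal{M}$ is the finite set of all simple-subsystem adjacency matrices.

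Fixing $M^* \in \mathcal{M}$ with $\rho_{M^*} = \rho^* := \max_{M \in \mathcal{M}} \rho_M$ and a positive right Perron eigenvector $v^*$ of $M^*$, the remaining goal is to verify $M v^* \le \rho^* v^*$ componentwise for every $M \in \mathcal{M}$. Granted this, $\max_{M \in \mathcal{M}} M v^* = \rho^* v^*$; choosing $K > 0$ with $\theta_1 \le K v^*$ (possible since $v^* > 0$) and iterating yields $\theta_n \le K (\rho^*)^{n-1} v^* + O((\rho^*)^n)\, \mathbf{1}$, whence $\|\theta_n\|_\infty = O((\rho^*)^n)$. Taking a second logarithm and dividing by $n$ then gives $h(X) \le \ln \rho^*$, matching the lower bound.

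The main obstacle is proving the componentwise domination $M v^* \le \rho^* v^*$. I argue by contradiction: if $(\tilde M v^*)_{i_0} > \rho^* v^*_{i_0}$ for some $\tilde M \in \mathcal{M}$ and some index $i_0$, then the row-independence of simple subsystems lets me splice row $i_0$ of $\tilde M$ into $M^*$ to obtain $M' \in \mathcal{M}$ satisfying $M' v^* \ge \rho^* v^*$ with strict inequality at coordinate $i_0$. Iterating $M'$ propagates this strict inequality to every coordinate reachable from $i_0$ in the directed graph of $M'$, and a Collatz--Wielandt style argument on the dominant strongly connected component then upgrades $\rho_{M'} \ge \rho^*$ to $\rho_{M'} > \rho^*$, contradicting the choice of $M^*$. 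The delicate points are handling possible reducibility of $M^*$ and verifying that the essentiality hypothesis, which guarantees that each $\gamma_{i,n}$ grows unboundedly and hence that $v^*$ has positive entries on the dominant communicating class, is strong enough to keep this perturbation argument nondegenerate; this is where I expect the bulk of the technical effort to go.
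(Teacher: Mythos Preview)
Your lower-bound argument is essentially the paper's (implicit) one. For the upper bound, however, your route and the paper's diverge.

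The paper's argument is shorter and avoids any eigenvector analysis. It asserts, as a consequence of the monotonicity observed in Proposition~\ref{prop:entropy-exist-TSFT}, that after relabelling one may assume the ordering
\[
\gamma_{1;n}\ge\gamma_{2;n}\ge\cdots\ge\gamma_{k;n}\ge\gamma_{1;n-1}\ge\cdots\ge\gamma_{k;n-1}
\]
holds for \emph{every} $n\ge 3$. Under this standing ordering the term $\gamma_{\bar i_1;n-1}\gamma_{\bar i_2;n-2}$ that dominates the sum in \eqref{eq:gamma-recursive-formula} is the same for all $n$, so a \emph{single} matrix $M\in\mathcal{M}$ (not a sequence $M_n$) satisfies $\theta_n\le M\theta_{n-1}+(\ln\kappa)\mathbf 1_{2k}$. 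Iterating and bounding the resulting geometric-type sum gives $h(X)\le\ln\rho_M\le\hbar$ directly, with no Collatz--Wielandt step at all.

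Your approach trades the ordering assumption for a joint-spectral-radius style argument: let $M_n$ vary, then find one positive $v^*$ with $Mv^*\le\rho^*v^*$ for every $M\in\mathcal{M}$. This is conceptually cleaner, but the contradiction step you sketch is where it can fail. From $M'v^*\ge\rho^*v^*$ with strict inequality at a single coordinate and $v^*>0$, one cannot in general conclude $\rho_{M'}>\rho^*$ when $M'$ is reducible; for instance $M'=\bigl(\begin{smallmatrix}1&1\\0&1\end{smallmatrix}\bigr)$, $v^*=(1,1)^T$, $\rho^*=1$ already gives $M'v^*=(2,1)^T\ge v^*$ with strict inequality at the first coordinate, yet $\rho_{M'}=1$. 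The strict excess at $i_0$ propagates only to indices that can \emph{reach} $i_0$ (not, as you wrote, those reachable \emph{from} $i_0$), and there is no a priori reason this set contains a strongly connected component realizing $\rho_{M'}$. You flag reducibility as the delicate point, which is accurate, but the essentiality hypothesis on its own does not obviously supply the needed connectivity in the spliced matrix $M'$; closing this would require exploiting the specific block structure of the matrices in $\mathcal{M}$ (even rows fixed with a single $1$, odd rows of row-sum two), and that argument is not yet in the proposal. The paper's fixed-ordering device sidesteps this issue entirely.
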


The main idea of the proof of Theorem \ref{thm:entropy-Fibonacci-tree-shift} can be examined via the following example.

\begin{example}\label{eg:A12-golden-mean-p3}
Suppose that $\mathcal{A} = \{c_1, c_2\}$ and
$$
A_1 = A_2 = \begin{pmatrix}
1 & 1 \\
1 & 0
\end{pmatrix}
$$
are the same as considered in Example \ref{eg:A12-golden-mean}. Recall that the recurrence representation of $\mathsf{X}_{A_1, A_2}$ is
$$
\left\{
\begin{aligned}
\gamma_{1; n} &= (\gamma_{1; n-1} + \gamma_{2; n-1}) \cdot (2\gamma_{1; n-2} + \gamma_{2; n-2}), \\
\gamma_{2; n} &= \gamma_{1; n-1} \cdot (\gamma_{1; n-2} + \gamma_{2; n-2}), \\
\gamma_{1; 3} &= 15, \gamma_{2; 3} = 8, \gamma_{1; 2} = 4, \gamma_{2; 2} = 1.
\end{aligned}
\right.
$$
It can be verified without difficulty that
$$
\gamma_{1; n} \geq \gamma_{2; n} \geq \gamma_{1; n-1} \geq \gamma_{2; n-1} \quad \text{for} \quad n \geq 3.
$$
Therefore, we have two inequalities
\begin{align*}
\gamma_{1; n} &= (\gamma_{1; n-1} + \gamma_{2; n-1}) \cdot (2\gamma_{1; n-2} + \gamma_{2; n-2}) \\
 &= 2 \gamma_{1; n-1} \gamma_{1; n-2} + \gamma_{1; n-1} \gamma_{2; n-2} + 2 \gamma_{2; n-1} \gamma_{1; n-2} + \gamma_{2; n-1} \gamma_{2; n-2} \\
 &= \gamma_{1; n-1} \gamma_{1; n-2} \left( 2 + \frac{\gamma_{2; n-2}}{\gamma_{1; n-2}} + 2 \frac{\gamma_{2; n-1}}{\gamma_{1; n-1}} + \frac{\gamma_{2; n-1} \gamma_{2; n-2}}{\gamma_{1; n-1} \gamma_{1; n-2}} \right) \leq 6 \gamma_{1; n-1} \gamma_{1; n-2},
\end{align*}
and
\begin{align*}
\gamma_{2; n} &= \gamma_{1; n-1} \cdot (\gamma_{1; n-2} + \gamma_{2; n-2}) \\
 &= \gamma_{1; n-1} \gamma_{1; n-2} + \gamma_{1; n-1} \gamma_{2; n-2} \\
 &= \gamma_{1; n-1} \gamma_{1; n-2} \left( 1 + \frac{\gamma_{2; n-2}}{\gamma_{1; n-2}} \right) \leq 2 \gamma_{1; n-1} \gamma_{1; n-2} < 6 \gamma_{1; n-1} \gamma_{1; n-2}.
\end{align*}
Consider the nonlinear recursive system
$$
\alpha_{i; n} = 6 \alpha_{1; n-1} \alpha_{1; n-2}, \quad i = 1, 2, n \geq 3,
$$
let
$$
\theta_n = ( \ln \alpha_{1;n}, \ln \alpha_{1;n-1}, \ln \alpha_{2;n}, \ln \alpha_{2;n-1})^T
$$
and
$$
M = \begin{pmatrix}
1 & 1 & 0 & 0 \\
1 & 0 & 0 & 0 \\
1 & 1 & 0 & 0 \\
0 & 0 & 1 & 0
\end{pmatrix}.
$$
It is noteworthy that $M$ is also the adjacency matrix of the simple recurrence representation
$$
\gamma_{i; n} = \gamma_{1; n-1} \gamma_{1; n-2}, \quad i = 1, 2, n \geq 3,
$$
of $X$. Meanwhile,
$$
\theta_n = M \theta_{n-1} + (\ln 6) \mathbf{1}_4 = M^{n-2} \theta_2 + (\ln 6) (M^{n-3} + \cdots + M + I_4) \mathbf{1}_4,
$$
where $\mathbf{1}_4 = (1, 1, 1, 1)^T$. Therefore, we derive from Proposition \ref{prop:entropy-exist-TSFT} that
\begin{align*}
h(\mathsf{X}_{A_1, A_2}) &= \lim_{n \to \infty} \frac{\ln \sum_{i=1}^2 \ln \gamma_{i;n}}{n} \\
 &\leq \lim_{n \to \infty} \frac{\ln \sum_{i=1}^2 \ln \alpha_{i;n}}{n} \leq \lim_{n \to \infty} \frac{\ln \sum_{i=1}^{n-2} \left\Vert M^{i} \right\Vert}{n} = \ln g,
\end{align*}
where $g = \dfrac{1 + \sqrt{5}}{2}$ is the golden mean.

Obviously, $h(\mathsf{X}_{A_1, A_2}) \geq \ln g$. This concludes that $h(\mathsf{X}_{A_1, A_2}) = \ln g$.
\end{example}

\begin{proof}[Proof of Theorem \ref{thm:entropy-Fibonacci-tree-shift}]
Since every symbol is essential, we may assume that, for $1 \leq i \leq k$, $\gamma_{i;2} \geq 2$ for simplicity. Set
$$
\hbar = \max \{\ln \rho_M: M \text{ is the adjacency matrix of a simple subsystem of } X\}.
$$
It suffices to show that $h(X) \leq \hbar$. Since $h(X)$ exists (cf.~Proposition \ref{prop:entropy-exist-TSFT}), we may assume without losing the generality that 
\begin{equation}
\gamma_{1;n}\geq \gamma_{2;n}\geq \cdots \geq \gamma _{k;n}\geq\gamma _{1;n-1}\geq  \gamma _{2;n-1}\geq \cdots \geq \gamma _{k;n-1}
\end{equation}
for $n \geq 3$. Recall that, for $1 \leq i \leq k$ and $n \geq 4$,
$$
\gamma_{i; n} = \sum\limits_{j_1, j_2, j_3 = 1}^k a_{i, j_1} b_{i, j_2} a_{j_2, j_3} \gamma_{j_1; n-1} \gamma_{j_3; n-2},
$$
where $A_1 = (a_{i, j})_{1 \leq i, j \leq k}, A_2 = (b_{i, j})_{1 \leq i, j \leq k}$. Let $\gamma_{\bar{i}_1; n-1} \gamma_{\bar{i}_2; n-2}$ be the maximum in the above equation. Then
\begin{align*}
\gamma_{i; n} &= \sum\limits_{j_1, j_2, j_3 = 1}^k a_{i, j_1} b_{i, j_2} a_{j_2, j_3} \gamma_{j_1; n-1} \gamma_{j_3; n-2} \\
 &= \gamma_{\bar{i}_1; n-1} \gamma_{\bar{i}_2; n-2} \sum\limits_{j_1, j_2, j_3 = 1}^k a_{i, j_1} b_{i, j_2} a_{j_2, j_3} \frac{\gamma_{j_1; n-1} \gamma_{j_3; n-2}}{\gamma_{\bar{i}_1; n-1} \gamma_{\bar{i}_2; n-2}} \\
 &\leq \kappa \gamma_{\bar{i}_1; n-1} \gamma_{\bar{i}_2; n-2}
\end{align*}
for some constant $\kappa \in \mathbb{N}$. Notably, $\kappa$ is independent of $i$ and $n$. Consider the nonlinear recursive system
\begin{equation}\label{eq:proof-main-thm-subsystem-with-kappa}
\alpha_{i; n} = \kappa \alpha_{\bar{i}_1; n-1} \alpha_{\bar{i}_2; n-2} \quad \text{for} \quad 1 \leq i \leq k.
\end{equation}
Let
$$
\theta_n = ( \ln \alpha_{1;n}, \ln \alpha_{1;n-1}, \ln \alpha_{2;n}, \ln \alpha_{2;n-1}, \ldots, \ln \alpha_{k;n}, \ln \alpha_{k;n-1})^T.
$$
Similar to the discussion in the proof of Theorem \ref{thm:entropy-simple-system}, there exists a $2k\times 2k$ integral matrix $M$ such that
\begin{align*}
\theta_n &= M \theta_{n-1} + (\ln \kappa) \mathbf{1}_{2k} \\
 &= M^{n-2} \theta_2 + (\ln \kappa) (M^{n-3} + \cdots + M + I_{2k}) \mathbf{1}_{2k},
\end{align*}
where $\mathbf{1}_{2k} = (1, \ldots, 1)^T$ is a $2k \times 1$ vector. Combining the computation above with the fact $\gamma_{i;n}\geq 2$ indicates that
\begin{align*}
h(X) &= \lim_{n \to \infty} \frac{\ln^2 \sum_{i=1}^k \gamma_{i;n}}{n} = \lim_{n \to \infty} \frac{\ln \sum_{i=1}^k \ln \gamma_{i;n}}{n} \\
 &\leq \lim_{n \to \infty} \frac{\ln \sum_{i=1}^k \ln \alpha_{i;n}}{n} \leq \lim_{n \to \infty} \frac{\ln \sum_{i=1}^{n-2} \left\Vert M^{i} \right\Vert}{n} = \ln \rho_M.
\end{align*}
Observe that \eqref{eq:proof-main-thm-subsystem-with-kappa} is a simple recurrence representation of $X$ if we replace $\kappa$ in \eqref{eq:proof-main-thm-subsystem-with-kappa} by $1$, and $M$ is the adjacency matrix of such a new system. Therefore, we conclude that $h(X) \leq \hbar$.

The proof is thus complete.
\end{proof}

\begin{remark}
A key presumption in Theorem \ref{thm:entropy-Fibonacci-tree-shift} is that every symbol is essential. If there are inessential symbols, the proof of Theorem \ref{thm:entropy-Fibonacci-tree-shift} infers that we only need to replace $M$ by $M'$, where $M'$ is obtained by deleting all the rows and columns indexed by those inessential symbols.
\end{remark}

\section{Neural Networks on Fibonacci-Cayley Tree}

In this section, we apply the algorithm developed in the previous section to neural networks with the Fibonacci-Cayley tree as their underlying space. For some reasoning, the overwhelming majority of considered underlying spaces of neural networks are $\mathbb{Z}^n$ lattice for some $n \in \mathbb{N}$; nevertheless, it is known that the characteristic shape of neurons is tree. Some medical experiments suggest that the signal transmission might be blocked by those diseased neurons, which motivate the investigation of neural networks on ``incomplete" trees such as Fibonacci-Cayley tree.

\subsection{Neural networks on Cayley tree}

To clarify the discussion, we focus on the neural networks proposed by Chua and Yang \cite{CY-ITCS1988}, which is known as cellular neural network and is widely applied to many disciplines such as signal propagation between neurons, pattern recognition, and self-organization; the elaboration of other models such as Hopfield neural networks can be carried out analogously.

Let $\Sigma^*$ be the set of nodes of Cayleys as described in the previous section. A \emph{cellular neural network on Cayley tree} (CTCNN) is represented as
\begin{equation}\label{eq:cnn-tree}
\dfrac{d}{dt} x_w(t) = - x_w(t) + z + \sum_{v \in \mathcal{N}} a_v f(x_{wv}(t)),
\end{equation}
for some finite set $\mathcal{N} \subset \Sigma^*$ known as the neighborhood, $v \in \mathcal{N}$, and $t\geq 0$. The transformation
\begin{equation}\label{eq:piecewise-linear}
f(s)=\dfrac{1}{2}(|s+1|-|s-1|)  
\end{equation}
is called the \emph{output function} or \emph{activation function}, and $z$ is called the \emph{threshold}. The weighted parameters $\mathbf{A} = (a_v)_{v \in \mathcal{N}}$ is called the \emph{feedback template}, and  Figure \ref{fig:TreeCNN} shows the connection of a binary CTCNN with the nearest neighborhood.

\begin{figure}[tbp]
\begin{center}
\includegraphics[scale=0.8,page=3]{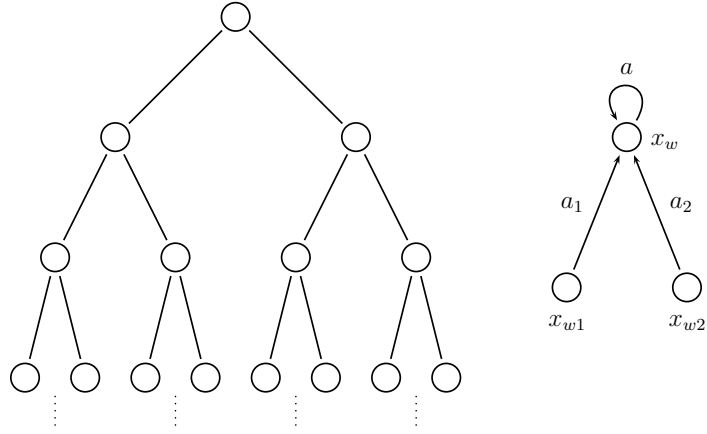}
\end{center}
\caption{A cellular neural network with the nearest neighborhood defined on binary trees. In this case, the neighborhood $\mathcal{N} = \{\epsilon, 0, 1\}$ and $a = a_{\epsilon}$.}
\label{fig:TreeCNN}
\end{figure}

A \emph{mosaic solution} $x = (x_w)_{w \in \Sigma^*}$ of \eqref{eq:cnn-tree} is an equilibrium solution which satisfies $|x_w| > 1$ for all $w \in \Sigma^*$; its corresponding pattern $y = (y_w)_{w \in \Sigma^*} = (f(x_w))_{w \in \Sigma^*}$ is called a \emph{mosaic output pattern}. Since the output function \eqref{eq:piecewise-linear} is piecewise linear with $f(s)=1$ (resp.~$-1$) if $s \geq 1$ (resp.~$s \leq -1$), the output of a mosaic solution $x = (x_w)_{w \in \Sigma^*}$ must be an element in $\left\{ -1,+1\right\}^{\Sigma^*}$, which is why we call it a \emph{pattern}. Given a CTCNN, we refer to $\mathbf{Y}$ as the output solution space; namely,
\begin{equation*}
\mathbf{Y} = \left\{ (y_w)_{w \in \Sigma^*}: y_w = f(x_w) \text{ and } (x_w)_{w \in \Sigma^*} \text{ is a mosaic solution of } \eqref{eq:cnn-tree} \right\} .
\end{equation*}

\subsection{Learning problem}

Learning problem (also called the inverse problem) is one of the most investigated topics in a variety of disciplines. From the mathematical point of view, determining whether a given collection of output patterns can be exhibited by a CTCNN is essential for the study of learning problem. This subsection reveals the necessary and sufficient condition for the capability of exhibiting the output patterns of CTCNNs. For the compactness and consistency of this paper, we focus on cellular neural networks on the binary tree with the nearest neighborhood. The discussion of general cases is similar to the investigation in \cite{BC-NN2015,BCLL-JDE2009,Chang-ITNNLS2015}, thus it is omitted.

A CTCNN with the nearest neighborhood is realized as
\begin{equation}\label{eq:cnn-tree-nearest-nbd}
\dfrac{d}{dt} x_w(t) = - x_w(t) + z + a f(x_{w}(t)) + a_1 f(x_{w1}(t)) + a_2 f(x_{w2}(t)),
\end{equation}
where $a, a_1, a_2 \in \mathbb{R}$ and $w \in \Sigma^*$. Considering the mosaic solution $x = (x_w)_{w \in \Sigma^*}$, the necessary and sufficient condition for $y_w = f(x_w) = 1$ is
\begin{equation}\label{eq:cnn-state+}
a - 1 + z > -(a_1 y_{w1} + a_2 y_{w2}).
\end{equation}
Similarly, the necessary and sufficient condition for $y_w = f(x_w) = -1$ is
\begin{equation}\label{eq:cnn-state-}
a - 1 - z > a_1 y_{w1} + a_2 y_{w2}.
\end{equation}
Let
$$
V^2 = \{ \mathbf{v} \in \mathbb{R}^2 : \mathbf{v} = (v_1, v_2), \text{ and } |v_i| = 1, 1 \leq i \leq 2 \},
$$
and let $\alpha = (a_1, a_2)$ represent the feedback template without the self-feedback parameter $a$. The admissible local patterns with the $+1$ state in the parent neuron is denoted by
\begin{equation}\label{eq:cnn-state+no-center}
\widetilde{\mathcal{B}}_+( \mathbf{A}, z) = \{\mathbf{v} \in V^2: a - 1 + z > -\alpha \cdot v \},
\end{equation}
where ``$\cdot$" is the inner product in Euclidean space. Similarly, the admissible local patterns with the $-1$ state in the parent neuron is denoted by
\begin{equation}\label{eq:cnn-state-no-center}
\widetilde{\mathcal{B}}_-( \mathbf{A}, z) = \{\mathbf{v} \in V^2: a - 1 - z > \alpha \cdot v \}.
\end{equation}
Furthermore, the admissible local patterns induced by $(\mathbf{A}, z)$ can be denoted by
\begin{equation}
\mathcal{B}(\mathbf{A}, z) = \mathcal{B}_+( \mathbf{A}, z) \bigcup \mathcal{B}_-( \mathbf{A}, z),
\end{equation}
where
\begin{align*}
\mathcal{B}_+( \mathbf{A}, z) &= \{\mathbf{v}: v_{\epsilon} = 1 \text{ and } (v_1, v_2) \in \widetilde{\mathcal{B}}_+( \mathbf{A}, z)\}, \\
\mathcal{B}_-( \mathbf{A}, z) &= \{\mathbf{v}: v_{\epsilon} = -1 \text{ and } (v_1, v_2) \in \widetilde{\mathcal{B}}_-( \mathbf{A}, z)\}.
\end{align*}
Namely, $\mathcal{B}(\mathbf{A}, z)$ consists of two-blocks over $\mathcal{A} = \{1, -1\}$. For simplicity, we omit the parameters $(\mathbf{A}, z)$ and refer to $\mathcal{B}(\mathbf{A}, z)$ as $\mathcal{B}$. Meanwhile, we denote the output space $\mathbf{Y}$ by $\mathsf{X}_{\mathcal{B}}$ to specify the set of local patterns $\mathcal{B}$.

Suppose $U$ is a subset of $V^n$, where $n \geq 2 \in \mathbb{N}$. Let $U^c = V^n \setminus U$. We say that $U$ satisfies the \emph{linear separation property} if there exists a hyperplane $H$ which separates $U$ and $U^c$. More precisely, $U$ satisfies the separation property if and only if there exists a linear functional $g(z_1, z_2, \ldots, z_n) = c_1 z_1 + c_2 z_2 + \cdots + c_n z_n$ such that
$$
g(\mathbf{v}) > 0 \quad \text{for} \quad \mathbf{v} \in U \quad \text{and} \quad g(\mathbf{v}) < 0 \quad \text{for} \quad \mathbf{v} \in U^c.
$$
Figure \ref{fig:separation} interprets those $U \subset V^2$ satisfying the linear separation property.

\begin{figure}[tbp]
\begin{center}
\includegraphics[scale=0.8,page=4]{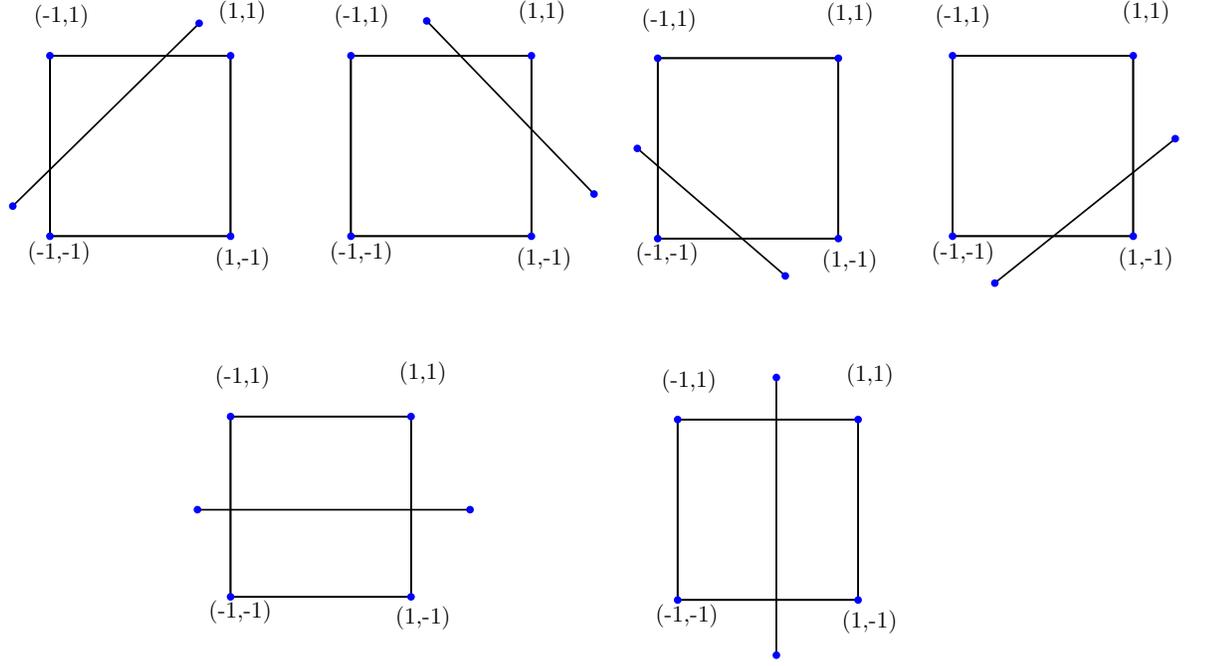}
\end{center}
\caption{Suppose $U$ is a proper subset of $V^2 = \{-1, 1\}^2$. There are only $12$ choices of $U$ that satisfies the linear separation property}
\label{fig:separation}
\end{figure}

Proposition \ref{prop:separation-property} elucidates the necessary and sufficient condition for the learning problem of CTCNNs, which follows from straightforward examination. Thus the proof is omitted. Such a property holds for arbitrary $\mathcal{N}$ provided $\mathcal{N}$ is prefix-closed. Readers are referred to \cite{BC-NN2015} for more details.

\begin{proposition}\label{prop:separation-property}
A collection of patterns $\mathcal{B} = \mathcal{B}_+ \bigcup \mathcal{B}_-$ can be realized in \eqref{eq:cnn-tree-nearest-nbd} if and only if either of the following conditions is satisfied:
\begin{enumerate}[\bf ({Inv}1)]
\item $-\widetilde{\mathcal{B}}_+ \subseteq \widetilde{\mathcal{B}}_-$ and $\widetilde{\mathcal{B}}_-$ satisfies linear separation property;
\item $-\widetilde{\mathcal{B}}_- \subseteq \widetilde{\mathcal{B}}_+$ and $\widetilde{\mathcal{B}}_+$ satisfies linear separation property.
\end{enumerate}
Herein, $\widetilde{\mathcal{B}}_+$ and $\widetilde{\mathcal{B}}_-$ are defined in \eqref{eq:cnn-state+no-center} and \eqref{eq:cnn-state-no-center}, respectively.
\end{proposition}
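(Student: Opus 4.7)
The plan is to verify necessity and sufficiency separately by exploiting the affine halfspace description of $\widetilde{\mathcal{B}}_\pm$ that \eqref{eq:cnn-state+no-center} and \eqref{eq:cnn-state-no-center} supply. Throughout I would write $\alpha = (a_1, a_2)$, $T_+ = -(a-1+z)$, and $T_- = a-1-z$; a CNN realization then identifies $\widetilde{\mathcal{B}}_+ = \{v \in V^2 : \alpha \cdot v > T_+\}$ and $\widetilde{\mathcal{B}}_- = \{v \in V^2 : \alpha \cdot v < T_-\}$, both halfspaces and therefore automatically linearly separable.

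For necessity, I would compute $-\widetilde{\mathcal{B}}_- = \{v : \alpha \cdot v > -T_-\}$ directly, so the inclusion $-\widetilde{\mathcal{B}}_- \subseteq \widetilde{\mathcal{B}}_+$ is equivalent to $-T_- \geq T_+$. Combined with the identity $T_+ + T_- = -2z$, this reduces to $z \geq 0$; an analogous manipulation gives $-\widetilde{\mathcal{B}}_+ \subseteq \widetilde{\mathcal{B}}_-$ iff $z \leq 0$. Since every real $z$ is nonnegative or nonpositive, any realization yields (Inv1) or (Inv2) (or both, when $z = 0$).

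For sufficiency I would handle (Inv2); (Inv1) follows by swapping the roles of $+$ and $-$ (equivalently, by negating $z$). The separability of $\widetilde{\mathcal{B}}_+$ furnishes a direction $\alpha$ and a scalar $T_+$ with $\widetilde{\mathcal{B}}_+ = \{v : \alpha \cdot v > T_+\}$, and rewriting $-\widetilde{\mathcal{B}}_- \subseteq \widetilde{\mathcal{B}}_+$ forces $\widetilde{\mathcal{B}}_- \subseteq \{v : \alpha \cdot v < -T_+\}$. If a $T_-$ can be picked with $\widetilde{\mathcal{B}}_- = \{v : \alpha \cdot v < T_-\}$ and $T_- \leq -T_+$, then setting $a = 1 + (T_- - T_+)/2$ and $z = -(T_+ + T_-)/2$ recovers the CNN parameters with $z \geq 0$, completing the realization. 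The delicate step, and what I expect to be the main obstacle, is the existence of such a $T_-$: the single direction $\alpha$ chosen from the separability of $\widetilde{\mathcal{B}}_+$ must simultaneously exhibit $\widetilde{\mathcal{B}}_-$ as a sublevel set of $\alpha \cdot v$. I would dispatch this by first observing that the assumption $-\widetilde{\mathcal{B}}_- \subseteq \widetilde{\mathcal{B}}_+$ pins $\widetilde{\mathcal{B}}_-$ into the antipodal halfspace $\{v : \alpha \cdot v < -T_+\}$, and then performing a short case check against the $12$ separable proper subsets of $V^2$ catalogued in Figure \ref{fig:separation}: for each admissible choice of $\widetilde{\mathcal{B}}_+$, the inclusion forces $\widetilde{\mathcal{B}}_-$ to be one of the sublevel sets of a common $\alpha$, at which point the required $T_-$ can be read off directly. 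This finite enumeration is routine but bookkeeping-heavy, and is the reason the proposition is stated to "follow from straightforward examination" while its full verification is deferred to \cite{BC-NN2015}.
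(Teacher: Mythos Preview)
The paper omits this proof entirely, stating only that it ``follows from straightforward examination'' and pointing to \cite{BC-NN2015}. Your proposal therefore cannot be compared against a paper argument; it is instead supplying what the authors chose to leave out, and the overall strategy---recognizing that a CNN realization forces $\widetilde{\mathcal{B}}_\pm$ to be opposite affine halfspaces in the common direction $\alpha$, then reducing the dichotomy (Inv1)/(Inv2) to the sign of $z$, and finally reconstructing parameters from a separating direction in the sufficiency half---is correct and is exactly the kind of verification the authors are gesturing at.

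One point deserves sharpening. In the sufficiency argument for (Inv2) you correctly flag as ``delicate'' the question of whether $\widetilde{\mathcal{B}}_-$ is a sublevel set of the \emph{same} $\alpha$ that separates $\widetilde{\mathcal{B}}_+$. In general it need not be: for instance, with $\alpha=(1,2)$ one has $\widetilde{\mathcal{B}}_+=\{(1,1),(-1,1)\}$ as a superlevel set, and $\widetilde{\mathcal{B}}_-=\{(1,-1)\}$ satisfies $-\widetilde{\mathcal{B}}_-\subseteq\widetilde{\mathcal{B}}_+$ but is \emph{not} a sublevel set for this $\alpha$ (since $\alpha\cdot(-1,-1)=-3<\alpha\cdot(1,-1)=-1$). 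The remedy is that the finite case check must allow $\alpha$ to be \emph{re}-chosen so that both sets are level sets simultaneously; in the example, $\alpha'=(-1,2)$ works. Your phrase ``a common $\alpha$'' already hints at this, but the proof should state explicitly that the enumeration produces, for each admissible pair $(\widetilde{\mathcal{B}}_+,\widetilde{\mathcal{B}}_-)$, a single direction (possibly different from the initially chosen one) realizing both. With that adjustment the argument goes through.
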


We emphasize that Proposition \ref{prop:separation-property} demonstrate the parameter space can be partitioned into finitely many equivalent regions. Indeed, whenever the parameters $a_1$ and $a_2$ are determined, \eqref{eq:cnn-state+} and \eqref{eq:cnn-state-} partition the $a$-$z$ plane into $25$ regions; the ``order" (i.e., the relative position) of lines $a - 1 +(-1)^{\ell} z = (-1)^{\ell} (a_1 y_{w1} + a_2 y_{w2})$, $\ell = 1, 2$, can be uniquely determined by the following procedures:
\begin{enumerate}[1)]
\item The signs of $a_1, a_2$ (i.e., the parameters are positive or negative).
\item The magnitude of $a_1, a_2$ (i.e., $|a_1| > |a_2|$ or $|a_1| < |a_2|$).
\end{enumerate}
This partitions $a$-$z$ plane into $8 \times 25 = 200$ regions. As a brief conclusion, the parameter space is partitioned into at most $200$ equivalent regions.

\begin{example}
Suppose that $0 < -a_1 < a_2$. It follows from $a_1 - a_2 < -a_1 - a_2 < a_1 + a_2 < -a_1 + a_2$ that, whenever $a$ and $z$ are fixed, the ``ordered'' basic set of admissible local patterns $\mathcal{B} = \mathcal{B}_+ \bigcup \mathcal{B}_-$ must obey
$$
\mathcal{B}_+ \subseteq \{(+; -, +), (+; +, +), (+; -, -), (+; +, -)\}
$$
and
$$
\mathcal{B}_- \subseteq \{(-; +, -), (-; -, -), (-; +, +), (-; -, +)\},
$$
herein, we denote a two-block $u$ by $(u_{\epsilon}; u_1, u_2)$ and use symbols ``$+$" and ``$-$" to represent $+1$ and $-1$, respectively, for clarity.

\begin{figure}[tbp]
\begin{center}
\includegraphics[scale=0.8,page=5]{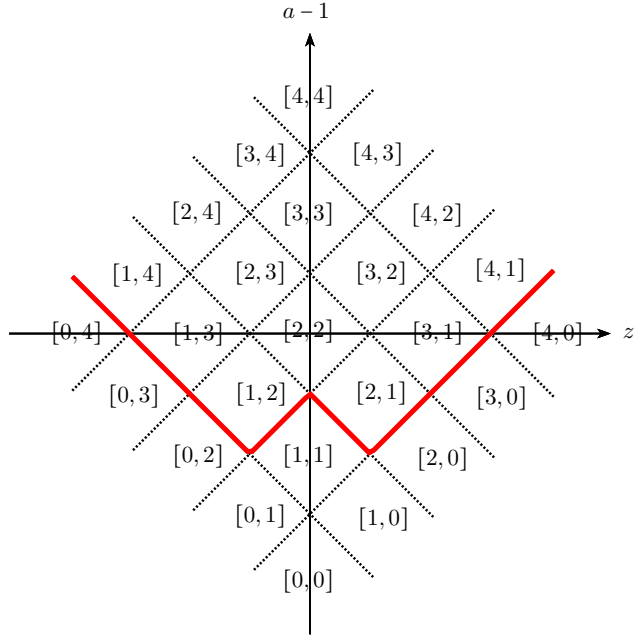}
\end{center}
\caption{For each chosen pair of parameters $(a_1, a_2)$, the $a$-$z$ plane is partitioned into $25$ equivalent regions.}
\label{fig:entropy-diagram-binary-tree}
\end{figure}

If the parameters $a$ and $z$ locate in the region $[3, 2]$(cf.~Figure \ref{fig:entropy-diagram-binary-tree}), then the set of feasible local patterns is
$$
\mathcal{B}_{[3, 2]} = \{(+; -, +), (+; +, +), (+; -, -), (-; +, -), (-; -, -)\}.
$$
A careful but straightforward examination indicates that two symbols are both inessential if and only if the pair of parameters $(a, z)$ is in the plane below the red $W$-shape line.
\end{example}

\subsection{Neural networks on Fibonacci-Cayley tree}

The definition of cellular neural networks on Fibonacci-Cayley tree is similar to the definition of Fibonacci-Cayley tree-shifts. Let $A$ and $\Sigma_A$ be the same as defined in the previous section. A \emph{cellular neural network on Fibonacci-Cayley tree} (FTCNN) with the nearest neighborhood is realized as
\begin{equation}\label{eq:cnn-fibonacci-tree-nearest-nbd}
\dfrac{d}{dt} x_w(t) = \left\{\begin{aligned}
&- x_w(t) + z + a f(x_{w}(t)) + a_1 f(x_{w1}(t)) + a_2 f(x_{w2}(t)), && w_n = 1; \\
&- x_w(t) + z + a f(x_{w}(t)) + a_1 f(x_{w1}(t)), && w_n = 2;
\end{aligned}\right.
\end{equation}
where $a, a_1, a_2 \in \mathbb{R}$ and $w = w_1 \cdots w_n \in \Sigma_A$. Suppose the parameters $\mathbf{A} = (a, a_1, a_2)$ and $z$ are given. Let $\mathcal{B}$ be the set of feasible local patterns corresponding to $(\mathbf{A}, z)$ which is studied in the previous subsection. It can be verified without difficulty that the output space of \eqref{eq:cnn-fibonacci-tree-nearest-nbd} is
$$
\mathbf{Y}_F = \{t \in \mathcal{A}^{\Sigma_A}: t = t'|_{\Sigma_A} \text{ for some } t' \in \mathbf{Y} = \mathsf{X}_{\mathcal{B}}\}.
$$
In other words, the output space $\mathbf{Y}_F$ of \eqref{eq:cnn-fibonacci-tree-nearest-nbd} is a Markov-Fibonacci tree-shift.

\subsection{Entropy of neural networks on Fibonacci tree}

It is of interest that how much information a diseased neural network can store. Based on the algorithm developed in the previous section, we study the entropy of FTCNNs with the nearest neighborhood.

Suppose that the parameters $a_1, a_2$ are given. A pair of parameters $(a, z)$ is called \emph{critical} if, for each $r > 0$, there exists $(a', z'), (a'', z'')$ such that $h(\mathsf{X}_{\mathcal{B}'}) \neq h(\mathsf{X}_{\mathcal{B}''}) = 0$, where $\mathcal{B}' = \mathcal{B}(a', a_1, a_2, z'), \mathcal{B}'' = \mathcal{B}(a'', a_1, a_2, z'')$. Notably, the $a$-$z$ plane is partitioned into $25$ equivalent regions whenever $a_1$ and $a_2$ are given and is indexed as $[p, q]$ for $0 \leq p, q \leq 4$; we use $\mathbf{Y}_{[p, q]}$ instead of $\mathsf{X}_{\mathcal{B}}$ for simplicity.

\begin{theorem}\label{thm:entropy-region-W-equation}
Suppose that the parameters $a_1, a_2$ are given. Let $m = \min\{|a_1|, |a_2|\}, M = \max\{|a_1|, |a_2|\}$. Then
$$
h(\mathbf{Y}_{[p, q]}) = \left\{\begin{aligned}
& 0, && \hbox{if $\min\{p, q\} = 0$ or $\max\{p, q\} = 1$;} \\
& \ln g, && \hbox{otherwise.}
\end{aligned}\right.
$$
Furthermore, $(a, z)$ is critical if and only if
\begin{equation}\label{eq:W-shape}
a - 1 = ||z| - m| - M.
\end{equation}
\end{theorem}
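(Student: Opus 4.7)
The plan is to classify the $25$ regions $[p,q]$ in the $a$-$z$ plane via the four ordered values $\alpha\cdot v$ for $v\in V^2$, and then to evaluate $h(\mathbf{Y}_{[p,q]})$ in each region via the recurrence representation \eqref{eq:gamma-recursive-formula} combined with Theorem \ref{thm:entropy-Fibonacci-tree-shift}. After exploiting the reflection symmetry $v_i\mapsto -\operatorname{sgn}(a_i)v_i$ I may assume $a_1=M$ and $a_2=m$, so that the four values are linearly ordered as $-M-m\le m-M\le M-m\le M+m$; the defining inequalities \eqref{eq:cnn-state+no-center}--\eqref{eq:cnn-state-no-center} then show that $p$ counts how many of these values exceed $1-a-z$ and $q$ counts how many lie strictly below $a-1-z$, producing the five-by-five grid of regions $[p,q]$.

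Next I would compute $h$ in each region. If $\min\{p,q\}=0$, one of $\widetilde{\mathcal{B}}_{\pm}$ is empty, leaving only a constant output tree and yielding $h=0$. If $\max\{p,q\}=1$, each row of \eqref{eq:gamma-recursive-formula} collapses to at most one product term, so an induction bounds $\gamma_{i;n}$ by a constant in $n$ and $h=0$ follows from the double logarithm in \eqref{eq:entropy-Fibonacci-tree-shift}. In the remaining regions, where $\min\{p,q\}\ge 1$ and $\max\{p,q\}\ge 2$, I would relabel the dominant symbol as $c_1$ and exhibit a simple subsystem $\gamma_{i;n}=\gamma_{1;n-1}\gamma_{1;n-2}$ of the form handled in Example \ref{eg:A12-golden-mean-p2}; its adjacency matrix has spectral radius $g$, so Theorem \ref{thm:entropy-Fibonacci-tree-shift} gives $h\ge \ln g$. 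For the matching upper bound, since $|\mathcal{A}|=2$ only finitely many simple subsystems occur, and a direct inspection of their $4\times 4$ adjacency matrices (or a structural comparison with the logarithmic Fibonacci recurrence $\log f_n=\log f_{n-1}+\log f_{n-2}$) confirms that no such spectral radius exceeds $g$.

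Finally I would assemble the W-shape formula by identifying which threshold of $p$ or $q$ separates $h=0$ from $h=\ln g$ in each portion of the $a$-$z$ plane. The candidate thresholds are $1-a-z\in\{M-m,\,M+m\}$ (transitions $p{:}1\to 2$ and $p{:}0\to 1$) and $a-1-z\in\{m-M,\,-M-m\}$ (transitions $q{:}1\to 2$ and $q{:}0\to 1$); tracking which one forms the actual boundary as $z$ ranges over $(-\infty,-m]$, $[-m,0]$, $[0,m]$, and $[m,\infty)$ yields the four segments
\begin{align*}
a-1 &= m-z-M \quad (0\le z\le m), & a-1 &= m+z-M \quad (-m\le z\le 0),\\
a-1 &= z-m-M \quad (z\ge m),       & a-1 &= -z-m-M \quad (z\le -m),
\end{align*}
which fit together into the single equation $a-1=\bigl||z|-m\bigr|-M$. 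The main obstacle I anticipate is the uniform upper bound $h\le \ln g$ in the golden-mean regions: while Example \ref{eg:A12-golden-mean-p3} handles one representative region, a careful enumeration (or a conceptual dominance argument against the logarithmic Fibonacci recurrence) is needed to rule out larger spectral radii among all simple subsystems arising from the various admissible $\mathcal{B}$.
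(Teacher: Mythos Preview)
Your outline is sound and reaches the right conclusion in every region; the derivation of the W-shape boundary in particular matches the paper's (the paper phrases it via $K_1=\max C$ and $K_2$ the second-largest element of $C=\{\ell_1a_1+\ell_2a_2\}$, but this is equivalent to your piecewise assembly). The substantive methodological difference is in how you establish $h=\ln g$ in the golden-mean regions.

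The paper first reduces by monotonicity of entropy to the two minimal regions $[1,2]$ and $[2,1]$, and then observes that \emph{every} simple subsystem there has adjacency matrix of the block form
\[
M=\begin{pmatrix}B_1&B_2\\ I_2&0_2\end{pmatrix}
\]
with each $B_i$ binary and row-stochastic (row sums equal to $1$). The positive vector $\mathbf{v}=(g,g,1,1)^T$ then satisfies $M\mathbf{v}=g\mathbf{v}$, so by Perron--Frobenius $\rho_M=g$ uniformly across all simple subsystems. This single eigenvector computation delivers both the lower and upper bounds at once and dissolves the ``main obstacle'' you flag: there is no need to enumerate the finitely many $4\times4$ adjacency matrices or to run a separate dominance argument against the logarithmic Fibonacci recurrence. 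Your route via exhibiting one specific subsystem for the lower bound and then inspecting all of them for the upper bound would still succeed, but it is more laborious and requires verifying case by case that your chosen subsystem $\gamma_{i;n}=\gamma_{1;n-1}\gamma_{1;n-2}$ actually occurs in the recurrence for the region at hand.

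One further point the paper handles explicitly and you leave implicit: in $[1,2]$ one of the two symbols may be inessential (e.g.\ when $\mathcal{B}_{[1,2]}=\{(+;+,+),(-;-,-),(-;-,+)\}$), and the paper treats that case separately by writing down $\mathcal{B}$, forming the adjacency matrix of the obvious simple subsystem, and then deleting the rows and columns indexed by the inessential symbol (as in the remark following Theorem~\ref{thm:entropy-Fibonacci-tree-shift}) before reading off the spectral radius $g$.
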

\begin{proof}
Obviously, $h(\mathbf{Y}_{[p, q]}) = 0$ if $\min\{p, q\} = 0$ or $\max\{p, q\} = 1$. We only need to show that $h(\mathbf{Y}_{[p, q]}) = \ln g$ provided $p, q > 0$ and $\max\{p, q\} \geq 2$. Actually, it suffices to consider the cases where $[p, q] = [1, 2]$ or $[p, q] = [2, 1]$ since the entropy function is increasing. We demonstrate that $h(\mathbf{Y}_{[1, 2]}) = \ln g$; the other case can be derived analogously, thus it is omitted.

Suppose that both symbols in $\mathcal{A}$ are essential. Let
$$
\theta_n = (\ln \gamma_{1;n}, \ln \gamma_{2;n}, \ln \gamma_{1;n-1}, \ln \gamma_{2;n-1})^T,
$$
where $\gamma_{1;n}$ (resp.~$\gamma_{2;n}$) denotes the number of $n$-blocks whose rooted symbol is $+$ (resp.~$-$). Notably, for each simple recurrence representation of $\mathbf{Y}_{[1, 2]}$, there exist $2 \times 2$ binary matrices $B_1$ and $B_2$ satisfying $\sum\limits_{k=1}^2 B_i (j, k) = 1$ for $1 \leq i, j \leq 2$ and
$M =  \begin{pmatrix}
B_1 & B_2 \\
I_2 & 0_2
\end{pmatrix}$
such that $\theta_n = M \theta_{n-1}$ for $n \geq 2$. Set $\mathbf{v} = (g, g, 1, 1)^T$. Then
$$
M \mathbf{v} = (g+1, g+1, g, g)^T = (g^2, g^2, g, g)^T = g \mathbf{v}.
$$
Perron-Frobenius Theorem and Theorem \ref{thm:entropy-Fibonacci-tree-shift} imply that $h(\mathbf{Y}_{[1, 2]}) \geq \ln g$. It is easily seen that $h(\mathbf{Y}_{[1, 2]}) \leq \ln g$, thus we can conclude that $h(\mathbf{Y}_{[1, 2]}) = \ln g$.

Suppose that there is exactly one essential symbol. (In this case, the essential symbol is $-$.) It can be verified from \eqref{eq:cnn-state+}, \eqref{eq:cnn-state-}, and Proposition \ref{prop:separation-property} that
$$
\mathcal{B}_{[1, 2]} = \{(+; +, +), (-; -, -), (-; -, +)\}
$$
or
$$
\mathcal{B}_{[1, 2]} = \{(+; +, +), (-; -, -), (-; +, -)\}.
$$
For either case,
$$
\gamma_{i;n} = \gamma_{i;n-1} \gamma_{i;n-2} \quad \text{for} \quad i = 1, 2,
$$
is a simple recurrence representation of $\mathbf{Y}_{[1, 2]}$, and its corresponding adjacency matrix is
$$
M = \begin{pmatrix}
1 & 0 & 1 & 0 \\
0 & 1 & 0 & 1 \\
1 & 0 & 0 & 0 \\
0 & 1 & 0 & 0
\end{pmatrix}.
$$
Deleting the first and third rows and columns of $M$ delivers
$M' = \begin{pmatrix}
1 & 1 \\
1 & 0
\end{pmatrix}$. This shows that
$$
h(\mathbf{Y}_{[1, 2]}) \geq \ln \rho_{M'} = \ln g,
$$
which is followed by $h(\mathbf{Y}_{[1, 2]}) = \ln g$.

Next, we show that $(a, z)$ is critical if and only if $(a, z)$ satisfies \eqref{eq:W-shape}. Let $C = \{\ell_1 a_1 + \ell_2 a_2: \ell_1, \ell_2 \in \{-1, 1\}\}$, and let
$$
K_1 = \max C \quad \text{and} \quad K_2 = \max C \setminus \{K_1\}.
$$
be the largest and the second largest elements in $C$, respectively. A careful but straightforward verification asserts that $(a, z)$ is critical if and only if
$$
a - 1 = \left| |z| -\dfrac{K_1 - K_2}{2}\right| - \dfrac{K_1 + K_2}{2}.
$$
(See Figure \ref{fig:entropy-diagram-binary-tree} for more information.) The desired result follows from the fact that
$$
K_1 = |a_1| + |a_2| \quad \text{and} \quad K_2 = |a_1| + |a_2| - 2m.
$$

This completes the proof.
\end{proof}

\section{Conclusion}

This paper studies the entropy of Fibonacci-Cayley tree-shifts, which are shift spaces defined on Fibonacci-Cayley trees. Entropy is one of the most frequently used invariant that reveals the growth rate of patterns stored in a system. Followed by demonstrating that the computation of the entropy of a Fibonacci tree-shift of finite type is equivalent to studying a nonlinear recursive system, we propose an algorithm for computing entropy. It is seen that the so-called simple recursive system plays an important role. 

As an application, we elucidate the complexity of neural networks whose topology is Fibonacci-Cayley tree. Such a model reflects a brain going with neuronal dysfunction such as Alzheimer's disease. A Fibonacci tree-shift came from a neural network is constrained by the so-called separation property. Aside from demonstrating a surprising phenomenon that there are only two possibilities of entropy for neural networks on Fibonacci-Cayley tree, we reveal the formula of the boundary in the parameter space.

A related interesting problem is ``does the black-or-white entropy spectrum of neural networks still hold for other network topologies?" Further discussion is in preparation.

\bibliographystyle{amsplain}
\bibliography{../../grece}

\end{document}